\numberwithin{equation}{section}
\newtheorem{theorem}{Theorem}[section]
\newtheorem{proposition}[theorem]{Proposition}
\newtheorem{lemma}[theorem]{Lemma}
\newtheorem{remark}{Remark}[section]
\newcommand{\LV}{\left|}
\newcommand{\RV}{\right|}
\newcommand{\LC}{\left(}
\newcommand{\RC}{\right)}
\newcommand{\p}{\partial}
\newcommand{\R}{\mathbb R}
\title[ ]{Quench detection on a superconducting radio-frequency cavity}
\author[Lai]{Ru-Yu Lai}
\address{School of Mathematics, University of Minnesota, Minneapolis, MN 55455, USA}
\curraddr{}
\email{rylai@umn.edu }
\thanks{}
\author[Spirn]{Daniel Spirn }
\address{School of Mathematics,  University of Minnesota, Minneapolis, MN 55455, USA}
\curraddr{}
\email{spirn@umn.edu}
\thanks{  }
\begin{document}
\maketitle
\begin{abstract}
We study quench detection in superconducting accelerator cavities cooled with He-II. 
A rigorous mathematical formula is derived to localize the quench position from dynamical data over a finite time interval at a second sound detector.

\end{abstract}
\section{Introduction}
Superconducting radio-frequency (SRF) cavities are an important technology for particle accelerators. Unlike  normal conducting cavities,  SRF cavities have the ability to withstand high electromagnetic fields and efficiently accelerate particle beams \cite{Conway}. These cavities, typically made of niobium, are bathed in liquid helium at low temperatures (1.6K-2.0K) to maintain  superconductivity.  
However, the appearance of defects on the surface of the cavity can enhance localized power losses in a cavity, which leads to heating of the surrounding area; and eventually, local superconductivity will return to a normal conducting state after reaching a critical temperature. 
This transition from superconducting to normal conducting, called a \textit{quench}, can happen quickly \cite{Conway}. It severely limits the accelerating gradient of the cavity
and can lead to damage of the device. For quality assurance of the cavity, it is crucial to identify defects by finding any localized quench positions. 


We introduce two quench localization systems, temperature mapping system and second sound system, that are widely used in laboratories. Many other diagnostic techniques exist, such as X-ray radiation mapping \cite{Sakai} and optical inspection, and we refer to \cite{Conway2017} and the references therein for more information. The temperature mapping system (T-mapping) \cite{Conway2017} is a traditional approach for locating a quench position by measuring small temperature increases on the exterior of SRF cavities in liquid helium bath. Thousands of temperature sensors are placed on the outer cavity surface to develop a mapping system that helps pinpoint hot spots in the cavity wall, known as quench spots. This technique is fairly time consuming compared with second sound system, discussed below, which requires less time and fewer detectors. 
The second sound system is based on a special property of superfluid helium in which normal and superfluid components coexists. This is known as the two fluid model of Tisza, and it was later refined by Landau \cite{Landau, Tisza}.

In the two fluid model, superfluid helium is described as an interpenetrating mixture of a normal component, which behaves like normal fluid, and a superfluid component with zero viscosity and zero entropy. It consists of conservation laws for the total density $\rho=\rho_s+\rho_n$, entropy $S$ per unit mass, and the total momentum $j=\rho_sv_s+\rho_nv_n$, where subscript $s$ and $n$ denote, respectively, the superfluid and normal components. Here $v_s$ and $v_n$ denote the superfluid and normal velocities. Let $P$ be the pressure and $\mathcal{T}$ be the temperature.
At low temperatures, since the entropy depends significantly on temperature, it leads to the following second sound wave:
$$
q \p_t^2 \delta \mathcal{T}- \Delta \delta \mathcal{T}=0,
$$
where $\mathcal{T}=\mathcal{T}_0+\delta\mathcal{T}$ with $\delta \mathcal{T}$ is the small variation of temperature in superfluid helium and $q^{-1}$ is the second sound velocity $S^2 \rho_s \mathcal{T}_0/(\rho_n c_v)$ with the specific heat capacity $c_v$ at constant volume.  
We refer to \cite{Khala, RT, Andreas} for  detailed discussions on the two fluid model and second sound wave, and chapter $8$ in \cite{Khala} for a discussion of  relevant boundary conditions.


When a quench occurs on a cavity surface, heat does not propagate by diffusion as it does in classical fluids; instead, temperature in the superfluid helium propagates by a wave. 
In the second sound system, by measuring the time delays between the quench and arrival of the second sound wave to detectors, in principle, the distance from the detector to the quench location is determined by the multiplication of theoretical second sound velocity and time delays. There are two distinct detectors in the second sound system to measure second sound waves that are high-sensitivity resistive temperature detectors \cite{SSBB, SSMBB} and oscillating superleak transducers (OSTs) \cite{SE}. The former uses thermometers to measure the thermal fluctuations of the second sound wave, while OSTs detect the fluctuation of counterflow velocity of the wave. 
In 2008, these special temperature microphones, OSTs, were first introduced by Conway, Hartill, Padamsee, and Smith at Cornell University \cite{Conway08, Conway} to detect second sound emitted by a quench. The technique depends on the high sensitivity of OSTs to the fluctuation of superfluid and normal fluid counterflow. By observing the first high fluctuation, the time of arrival of waves can be detected. 




Although the second sound system for SRF quench detection has been successful, there has been interest in the physics community to better understand two issues: uncertainties of the the exact location of the quench and the underlying physical process during emission of the entropy waves.
Many of  second sound systems assume that the quench is a singular point on the cavity surface, and then triangulate the exact quench location from the response of a few OSTs \cite{Conway, EM, MS}. That is to find the intersection point of spheres with centers at OSTs and radii equal to distances to the quench which are determined by travel time of second sound wave and its theoretical velocity. However, in practice, they have experienced  difficulties in exactly locating a quench. 
For instance, when the real wave velocity is higher than the theoretical one, the radii are too small to have an intersection point on the cavity surface, see \cite{EM15, Liu, MS, Plouin} and the references therein. 
The size of the quench is another potential source of inaccuracy. On the outer surface of the cavity, the size of heat source depends on dynamics of the heat propagation on the surface and the superfluid helium. Therefore, signals arriving at different OSTs might come from different positions of the quench spot \cite{Liu}. This results in an overestimated wave velocity for a closer OST; and therefore, the usage of the measured travel time in computing radii in triangulation might not provide precise determination of the quench position. This issue regarding a large quench spot has been studied in laboratories; we refer to papers \cite{Liu, MS}. More sophisticated models of heating at  the boundary of the SRF have also been studied in \cite{EM15}.



In this paper, we mathematically investigate the detection of a quench on the cavity surface by using second second wave that emitted on the quench propagates in superfluid helium. 
As mentioned above, the arrival second sound wave can be detected by special thermometers that are very sensitive to small change of temperature and also detectable by OSTs which are widely employed in many laboratories, however there are many factors that can affect the accuracy of travel time. 
Further investigations are needed for a better understanding of physical process involved during emission and propagation of the second sound wave in order to precisely localize a quench.
To achieve this, we place thermometers around the OST such that the temperature information on the boundary of OST can be detected. More precisely, we apply both temperature data detected by thermometers and an approximate travel time obtained by an OST to derive a mathematical formula for the distance between a quench location and an OST. This formula provides an exact distance information that can be utilized to recover the position and geometry information of the quench.
We would like to note that cavities equipped with OSTs and thermometry sensors have been considered in physical papers \cite{Bertucci, TE} in which this hybrid technique combines with two methods, OSTs and T-mapping, in order to cross check the consistency of their results.





The mathematical model and the main result are introduced as follows. 
Let $D$ represent the cavity in $\R^3$ and be an open bounded and connected subset of $\R^3$ with smooth boundary $\p D$.
Here we suppose the cavity's location and shape are known. 
Assume that one quench occurs on part of the cavity surface and the second sound wave emitted from the quench propagates in the helium bath to an OST.  

We denote the variation of temperature by $u = \delta \mathcal{T}$, which satisfies the wave equation with associated boundary conditions. 
Let $T_0$ be the observed time of the second sound wave propagation.
Then the mathematic model for the second sound wave in a superfluid helium bath is modeled as follows:  
\begin{align}\label{wave}
     \left\{  \begin{array}{rll}
         \partial^2_t u- \Delta u &=\ 0 & \hbox{in } (0, T_0)\times (\R^3\setminus \overline{D})  ,\\
         u|_{t=0} &  =\ 0, &\\ 
         \partial_t u|_{t=0} &=\ 0,\\
         u|_{(0,T_0) \times \p D } &=\ f,\\ 
         \p_\nu u|_{(0,T_0) \times\p D } &=\ g, 
     \end{array} 
    \right.
\end{align}
where $f\in L^2(0,T_0;H^1(\p D))$ and $g\in L^2(0,T_0;L^2(\p D))$ with the support of $f$ and $g$ in spatial dimensions contained in $\Gamma$. Here the quench spot is represented by $\Gamma$ which is an open and connected smooth surface of the boundary $\p D$, and $\nu$ is the unit outer normal to $D$. Notice that since the adjacent material around a small defect is still superconducting, there is no considerable heat loss there. Thus one can consider the boundary conditions $f$ and $g$ are compactly supported on a subset of the cavity surface $\p D$. 
Suppose that $f\geq \mu >0$ and $-g\geq \mu> 0$ for some $\mu>0$
and $\|f\|_{L^2(0,T_0;H^1(\p D))}, \|g\|_{L^2(0,T_0;H^1(\p D))}\leq M$ for some constant $M>0$.   An analysis can also be performed on \eqref{wave} under Robin-type boundary conditions under similar support and lower bound assumptions on the boundary data.
\begin{figure}[htbp] 
         \centering
         \begin{minipage}[b]{.25\textwidth} 
             \includegraphics[width=1.3in]{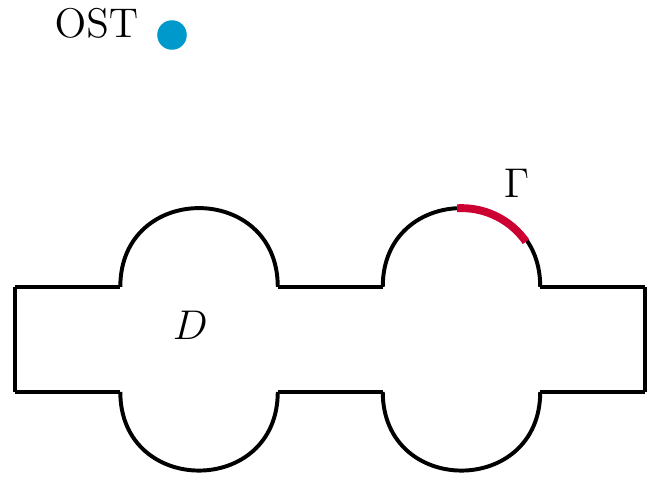}   
         \end{minipage}  
         \caption{A simple layout of quench detection: One quench occurs on the surface of 2-cell cavity with an OST installed in the superfluid helium. }  
\end{figure}

We denote an OST by an open ball $B=B(p,r)$ centered at $p$ with radius $r$ such that $B\subset \R^3\setminus \overline{D}$.
Let $\chi_B$ be a characteristic function on $B$. Then, for fixed constant $\tau>0$, there exists a solution $v\in H^1(\R^3)$ to the equation
$$
    \Delta v-\tau^2 v+\chi_B =0\ \ \hbox{in }\R^3.
$$
The collected dynamical data over a finite time interval on $\p B$ 
is described by the indicator function $I_{\p B}$, defined as follows: 
$$
    I_{\p B}(\tau)=\int_{\p B}\int^{T_0}_0 e^{-\tau s} (u(s,x) \p_\nu v - v\p_\nu u(s,x)) dsdS,
$$
where $\nu$ is the unit outer normal to $B$. 

Suppose $D$ and position of each OST are known. 
Notice that the boundary data $u$ on $(0,T_0)\times \p B$ determines the normal derivative of $u$ on $(0,T_0)\times \p B$, see proposition \ref{neumann}. 
The inverse problem is to find the quench location, $\Gamma$, from the information $u$ on $(0,T_0)\times \p B$ where $T_0$ can be determined by the OST.



We define the shortest path between $x,y\in \R^3\setminus D$ as follows. Let $d(x,y)$ denote the infimum of the lengths of all the piecewise $C^1$ paths in $\R^3\setminus D$ joining $x$ to $y$. It is clear that when $D$ is an empty set, we have $d(x,y)=|x-y|$.  
We define the distance between two subsets $A_1$ and $A_2$ in $\R^3\setminus D$ by 
$$d(A_1, A_2)=\inf\{d(x,y):\ x\in A_1,\ y\in A_2\}.$$ We also define the Euclidean distance of $A'_1$ and $A'_2$ in $\R^3$ by $$d_e(A'_1, A'_2)=\inf\{|x-y|:\ x\in A'_1,\ y\in A'_2 \}.$$ Since $|x-y|\leq d(x,y)$ for all $x,y\in\R^3\setminus D$, it implies that $d_e(A_1, A_2)\leq d(A_1, A_2)$ for subsets $A_1$ and $A_2$ in $\R^3\setminus D$.
 
A quench can be detected if the following conditions are satisfied. Suppose that for any $x\in \Gamma$, $x$ satisfies $-\alpha <\nu(x)\cdot(y-x)<\beta$ for all $y\in B$, where $\alpha,\beta>0$.
Suppose that the subset 
$$\Gamma_0(0)=\{x\in \Gamma:\ \nu(x)\cdot (y-x)>0\ \hbox{for all }y\in B\}$$
of $\Gamma$ has nonzero measure. Moreover, suppose that for all $x\in \Gamma_1=\Gamma\setminus\Gamma_0(0)=\{x\in \Gamma:\ \nu(x)\cdot (y-x)\leq 0\ \hbox{for some }y\in B\}$, $x$ satisfies $d_e(x,B)>d(\Gamma, B)$.
Suppose that the shortest distance between the quench, $\Gamma$, and an OST, $B$, is a direct light of sight path which implies that $d_e(\Gamma,B)=d(\Gamma, B)$. Here we note that this assumption is reasonable since the second sound travels in straight line to OSTs from a quench is a general assumption in papers in physics.




Suppose $d_e(x,p)$ has either a nondegenerate minimum on $\overline\Gamma$ or a minimum on boundary of $\Gamma$ which is not a critical point.
Then the distance from a quench spot to an OST is characterized by the following formula.

\begin{theorem}\label{main}
If $T_0>d(\Gamma,B)$, then
\begin{align}\label{main_id}
    \lim_{\tau\rightarrow\infty} \tau^{-1}\log I_{\p B}(\tau) = -d(\Gamma,B).
\end{align}
Moreover, if $\Gamma=\emptyset$ (empty set), then for all $T_0>0$, we have
\begin{align}\label{main_id1}
    \lim_{\tau\rightarrow\infty} e^{\tau T_0} I_{\p B}(\tau) =0;
\end{align}
if $\Gamma\neq\emptyset$, then for all $T_0>d(\Gamma, B)$, we have
\begin{align}\label{main_id2}
    \lim_{\tau\rightarrow\infty} e^{\tau T_0} I_{\p B}(\tau) =\infty.
\end{align}
\end{theorem}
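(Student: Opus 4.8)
The plan is to convert the time-domain indicator $I_{\partial B}$ into a spatial problem for the \emph{modified Helmholtz} (Yukawa) operator via a truncated Laplace transform, and then read off the exponential rate by a Laplace-type asymptotic analysis of the resulting surface integral over $\Gamma$. First I would set $\hat u(x,\tau)=\int_0^{T_0}e^{-\tau s}u(s,x)\,ds$ and integrate by parts twice in $s$; using the zero initial conditions in \eqref{wave}, this gives
$$\Delta\hat u-\tau^2\hat u=e^{-\tau T_0}\bigl(\partial_s u(T_0,\cdot)+\tau\,u(T_0,\cdot)\bigr)\quad\text{in }\R^3\setminus\overline D,$$
so $\hat u$ solves the same elliptic equation as $v$ (namely $\Delta v-\tau^2 v=0$ off $B$) up to a source carrying the exponentially small factor $e^{-\tau T_0}$. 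By finite propagation speed $\hat u(\cdot,\tau)$ is compactly supported, so nothing survives from infinity.

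Next I would apply Green's second identity to the pair $(\hat u,v)$ on $\Omega=\R^3\setminus(\overline D\cup\overline B)$. Since $\chi_B=0$ on $\Omega$ the two terms $\tau^2\hat u\,v$ cancel, the boundary at infinity drops out, and inserting the data $\hat u|_{\partial D}=\hat f:=\int_0^{T_0}e^{-\tau s}f\,ds$, $\partial_\nu\hat u|_{\partial D}=\hat g:=\int_0^{T_0}e^{-\tau s}g\,ds$ (both supported on $\Gamma$) reduces the indicator to
$$I_{\partial B}(\tau)=-\int_\Gamma\bigl(\hat f\,\partial_\nu v-v\,\hat g\bigr)\,dS+e^{-\tau T_0}R(\tau),\qquad R(\tau)=\int_\Omega v\,(\partial_s u(T_0)+\tau u(T_0))\,dx.$$
Controlling $R$ needs only energy/regularity estimates for the exterior wave problem together with the bound $\|v\|_{L^1(\R^3)}=O(\tau^{-2})$, read off from the explicit Yukawa representation $v(x,\tau)=\int_B\frac{e^{-\tau|x-y|}}{4\pi|x-y|}\,dy$; thus $R(\tau)$ stays bounded and $e^{-\tau T_0}R(\tau)$ is dominated by the main term whenever $T_0>d(\Gamma,B)$.

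The heart of the argument is the surface integral. From the representation $v>0$ and, differentiating under the integral, $\partial_\nu v(x)=\int_B\frac{\nu(x)\cdot(y-x)}{|x-y|}\bigl(\tau+|x-y|^{-1}\bigr)\frac{e^{-\tau|x-y|}}{4\pi|x-y|}\,dy$, which is \emph{strictly positive} on $\Gamma_0(0)$ by the hypothesis $\nu(x)\cdot(y-x)>0$. Combined with $\hat f>0$ and $\hat g<0$ (from $f\ge\mu>0$, $-g\ge\mu>0$), the two pieces of $\hat f\,\partial_\nu v-v\,\hat g$ carry the \emph{same} sign on $\Gamma_0(0)$, so no cancellation occurs near the minimizer; this fixes the leading coefficient to be nonzero with the sign dictated by the conventions in \eqref{wave}. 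Since $v$ and $\partial_\nu v$ decay like $e^{-\tau d_e(x,B)}$ and $d_e(\Gamma,B)=d(\Gamma,B)$ is attained on $\Gamma_0(0)$ — on $\Gamma_1$ one has $d_e(x,B)>d(\Gamma,B)$, hence a strictly smaller contribution — Laplace's method, justified precisely by the hypothesis that $d_e(\cdot,p)$ has a nondegenerate interior minimum or a non-critical boundary minimum on $\overline\Gamma$, yields $\int_\Gamma(\hat f\,\partial_\nu v-v\,\hat g)\,dS\sim C\,\tau^{-a}e^{-\tau d(\Gamma,B)}$ with $C\ne0$. Taking $\tau^{-1}\log$ gives \eqref{main_id}.

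Finally, \eqref{main_id1} is immediate: $\Gamma=\emptyset$ forces $f=g=0$, hence $u\equiv0$ and $I_{\partial B}\equiv0$. For \eqref{main_id2} I would multiply the reduced identity by $e^{\tau T_0}$: the main term becomes $C\,\tau^{-a}e^{\tau(T_0-d(\Gamma,B))}\to\infty$ whenever $T_0>d(\Gamma,B)$, while the remainder $e^{\tau T_0}\cdot e^{-\tau T_0}R(\tau)=R(\tau)$ stays bounded. I expect the main obstacle to be the sharp \emph{lower} bound on the surface integral: the upper bound $|I_{\partial B}|\le Ce^{-\tau(d(\Gamma,B)-\varepsilon)}$ follows directly from the decay of $v$, but matching it from below demands both the sign/no-cancellation analysis on $\Gamma_0(0)$ and a careful Laplace asymptotic with uniform control of the $\Gamma_1$ remainder and of $R(\tau)$ — which is where the geometric line-of-sight assumption and the positivity of $f,-g$ are all consumed.
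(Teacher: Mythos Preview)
Your proposal is correct and follows the paper's approach essentially step for step: truncated Laplace transform in $t$, Green's identity on $\R^3\setminus(\overline D\cup\overline B)$ to reduce $I_{\partial B}$ to a surface integral over $\Gamma$ plus an $e^{-\tau T_0}$-small remainder, and then matched Laplace-method lower and upper bounds on the $\Gamma$-integral using the positivity of $f,-g,v,\partial_\nu v$ on $\Gamma_0(0)$ together with the line-of-sight hypothesis. Two small points to tidy when you write it out: the sign in front of the $\Gamma$-integral must come out positive (the theorem needs $I_{\partial B}>0$ for $\log I_{\partial B}$ to make sense, so recheck the normal orientations), and the remainder $R(\tau)$ is more safely controlled via the $L^2$ pairing $\|v\|_{L^2(\R^3)}=O(\tau^{-3/2})$ against $u(T_0,\cdot)\in H^1$, $\partial_t u(T_0,\cdot)\in L^2$, since the assumed regularity does not furnish an $L^\infty$ bound to pair with your $\|v\|_{L^1}$.
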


%

Quench localization  
is different from inverse obstacle scattering problems in which  scattering waves are used to detect hidden obstacles in a medium. In particular, inverse scattering takes advantage of a  wave source outside a known surface that surrounds the obstacle, 
and then one infers information on the obstacle from  the scattering data collected over the surface in a finite time period. 
The enclosure method, first introduced by Ikehata in \cite{I2000}, 
was developed for inverse obstacle scattering problems and gave the distance between the source and the unknown obstacle from the asymptotic behavior of the indicator function. 
Though our setting is very different from  inverse obstacle scattering, we are inspired by work on the enclosure method in \cite{I10, Ireview, I2, I3, I15, I4}, and we  develop quantitative estimates on this indicator function in order to establish a distance formula from the OST to the quench.


We remark several features of the result in this paper.
First, we do not need the whole physical process and boundary information on the interface between the superfluid helium and the cavity. 
Partial information on the boundary data is sufficient in the derivation of the distant formula \eqref{main_id}. 
This is also coincide with the experimental setting in which only when resolving power is small, the thermal transport on the niobium surface needs to be considered.  
Second, the derivation of the formula only depends on sufficiently large observation time that does not need to be exact. 
This is different from the experiments by using OSTs solely in which the time of propagation of the second sound waves is important since timing uncertainties could only give an approximate location  of the quench \cite{Liu, MS}. Moreover, the size of a quench in our setting is not necessarily to be a singular point while it is a common assumption in papers in physics. In addition, we also allow part of the quench area can be outside of the OST's line of sight.



This paper is organized as follows. In section 2, we show that the quench can be detected by a distance formula that gives the distance from the quench to the OST. A further discussion on this problem is stated in section 3.

\section{Detection of a quench on the cavity surface}
In this section, we will derive a formula which reveals the shortest distance between the location of a quench spot on the surface of the cavity and an OST. 

We start by showing that knowing $u$ on $(0,T_0)\times \p B$, one can recover the Neumann data of $u$ on $(0,T_0)\times \p B$ as well. 
\begin{proposition}\label{neumann}
Let $u$ solve (\ref{wave}). 
Then $u|_{(0,T_0)\times \p B }$ determines uniquely $u$ in $(0,T_0)\times \overline B$ and the normal derivative of $u$ on $(0,T_0)\times\p B $.
\end{proposition}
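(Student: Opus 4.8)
The plan is to recognize that the restriction of $u$ to the ball $B$ solves a standard Dirichlet initial-boundary value problem for the wave equation on a fixed bounded domain, and that the Dirichlet-to-Neumann correspondence for this problem is an honest (single-valued) map. Prescribing $u$ on $(0,T_0)\times\p B$ then pins down both $u$ inside $\overline B$ and its normal derivative on $\p B$.

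First I would observe that, since $\overline B\subset\R^3\setminus\overline D$ (the OST is separated from the cavity wall), the ball $B$ lies in the region where the homogeneous equation in \eqref{wave} holds and is disjoint from $\Gamma\subset\p D$, the support of the boundary data. Hence $u$ restricted to $(0,T_0)\times B$ satisfies
$$
\p_t^2 u-\Delta u=0\ \hbox{in}\ (0,T_0)\times B,\qquad u|_{t=0}=0,\quad \p_t u|_{t=0}=0\ \hbox{on}\ B,
$$
together with the prescribed Dirichlet trace $u=f_B:=u|_{(0,T_0)\times\p B}$. This is exactly the Dirichlet IBVP for the wave equation on the fixed domain $B$ with zero Cauchy data at $t=0$.

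The core step is uniqueness, which I would establish by an energy estimate. Suppose $u_1,u_2$ both solve this IBVP with the same $f_B$ and zero initial data, and set $w=u_1-u_2$; then $w$ solves the homogeneous equation with zero initial data and zero Dirichlet data. Defining
$$
E(t)=\frac12\int_B\LC|\p_t w(t,x)|^2+|\nabla w(t,x)|^2\RC dx,
$$
multiplying the equation by $\p_t w$ and integrating by parts over $B$ gives $E'(t)=\int_{\p B}\p_t w\,\p_\nu w\,dS$. Since $w=0$ on $(0,T_0)\times\p B$ forces $\p_t w=0$ there, the boundary term vanishes, so $E(t)=E(0)=0$ for all $t$, whence $w\equiv 0$. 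Thus $u$ in $(0,T_0)\times\overline B$ is uniquely determined by $f_B$, and computing the normal derivative of this unique solution determines $\p_\nu u$ on $(0,T_0)\times\p B$.

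The main obstacle is the low regularity of the data. With only $f\in L^2(0,T_0;H^1(\p D))$ and $g\in L^2(0,T_0;L^2(\p D))$, the global solution sits in a finite-energy class, so a priori neither the integration by parts in the energy identity nor the Neumann trace on $\p B$ is meaningful as a classical object. I would resolve this by invoking the well-posedness and hidden trace regularity theory for the wave IBVP: finite-energy solutions possess a normal-derivative trace in $L^2((0,T_0)\times\p B)$, and the Dirichlet problem on $B$ is uniquely solvable in this class. The energy identity is then justified by approximating $f_B$ by smooth data and passing to the limit, after which the vanishing-energy argument closes the proof. Note that the argument is purely local to $B$ and uses neither finite speed of propagation nor any knowledge of $\Gamma$.
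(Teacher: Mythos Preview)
Your proposal is correct and follows essentially the same approach as the paper: both reduce the claim to uniqueness of the Dirichlet IBVP for the wave equation on the ball $B$ with zero initial data, the paper by introducing the auxiliary solution $\Phi$ and noting $u-\Phi$ solves the homogeneous problem, you by the equivalent energy argument for the difference of two candidate solutions. Your treatment is more detailed on the uniqueness step and the regularity caveats (which the paper leaves implicit), but the underlying idea is identical.
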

\begin{proof}
    Let $\Phi$ satisfy the wave equation
    \begin{align*} 
    \left\{ \begin{array}{rll}
    \p^2_t \Phi-\Delta \Phi &=\ 0  &\hbox{in } (0,T_0)\times B ,\\
    \Phi|_{t=0}   &=\ 0, \\
     \p_t\Phi|_{t=0} &=\ 0, & \\
     \Phi|_{(0,T_0)\times \p B}& =\ u|_{(0,T_0)\times \p B}.  \\
    \end{array} \right.
    \end{align*}
    Then $u-\Phi$ satisfies the wave equation $(\p_t^2-\Delta)(u-\Phi)=0$ with zero Dirichlet data and zero initial data. Then $u=\Phi$ in $(0,T_0)\times B$ and $\p_\nu u=\p_\nu \Phi$ in $(0,T_0)\times \p B$.
\end{proof}

For a fixed $\tau>0$, the equation
\begin{align}\label{equ_v}
    \Delta v-\tau^2 v+\chi_B =0\ \ \hbox{in }\R^3 
\end{align}
has a $C^2$ solution of the form
$$
    v(x)= {1\over 4\pi} \int_B {e^{-\tau |x-y|}\over |x-y|} \chi_B(y) dy,\ \ x\in\R^3,
$$
known as the volume potential, we refer to \cite{KH}.

By the assumed regularity of $f\in L^2(0,T_0;H^1(\p D))$ and $g\in L^2(0,T_0; L^2(\p D))$, from \cite{Lion}, there exists a unique solution $u$ such that $u\in L^\infty(0,T_0; H^1(\R^3\setminus \overline{D}))$ and $\p_t u\in  L^\infty(0,T_0; L^2(\R^3\setminus \overline{D}))$. 
Similar to \cite{I10}, we denote $W_\tau(x) =\int^{T_0}_0 e^{-\tau s} u(s,x) ds.$ Then $W_\tau$ is the weak solution to the equation
$$
    \Delta W_\tau -\tau^2 W_\tau = e^{-\tau T_0}(\tau u(T_0,x) + \p_t u(T_0,x))  \ \ \hbox{in }\R^3\setminus\overline{D} 
$$
with the Dirichlet boundary 
$
   f_\tau(x)= \int^{T_0}_0 e^{-\tau s}f(s,x) ds 
$
and the Neumann boundary
$
   g_\tau(x)= \int^{T_0}_0 e^{-\tau s}g(s,x) ds. 
$

Applying integration by parts, the integral on the boundary of $B$ is transfered to the one on the quench $\Gamma$ and a lower order term. Since $f$ and $g$ are supported in $\Gamma$, we have
\begin{align}\label{id_z}
     I_{\p B}&=\int_{\p B} (W_\tau \p_\nu v - v\p_\nu W_\tau) dS\notag\\
     &= - \int_{\p D} (W_\tau \p_\nu v - v\p_\nu W_\tau) dS + \int_{(\overline{B\cup D})^c} e^{-\tau T_0}(\tau u( T_0,x)+\p_t u(T_0, x)) v dx \notag \\
     &= \int_{\Gamma} (f_\tau  \p_\nu v - v g_\tau ) dS + \int_{(\overline{B\cup D})^c} e^{-\tau T_0}(\tau u(T_0,x)+\p_t u(T_0,x)) v dx.
\end{align}

We begin by showing the lower bound for the indicator function. We estimate the first integral in \eqref{id_z} in the Lemma 2.2-2.4. To derive the lower bounded of the first integral, since $f$ and $g$ are bounded below by a positive constant $\mu$, we only focus on the estimate of $\int v dS$ and $\int \p_\nu vdS$. Thus, it is crucial to quantitatively study the function $v$. 
The proof of lemma 2.2 can be found  in \cite{I10} and is provided below for the sake of completeness. 
\begin{lemma}
  There exists $\tau_0>0$ such that for all $\tau\geq \tau_0$, we have
  \begin{align}\label{est_v}
      \tau^2 e^{\tau d_e(x,B)}v(x)\geq C_0>0,
  \end{align}
  where $C_0$ is independent of $x\in\Gamma$.
\end{lemma}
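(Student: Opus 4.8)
The plan is to exploit the radial symmetry of $B=B(p,r)$ to turn the three-dimensional volume potential into an explicit one-dimensional integral, and then to read off its leading behavior as $\tau\to\infty$. Fix $x\in\Gamma$ and set $\rho=|x-p|$. Since $\Gamma\subset\p D$ while $B\subset\R^3\setminus\overline{D}$, the point $x$ lies outside $B$, so $\rho\geq r$ and the Euclidean distance to the ball is $d_e(x,B)=\rho-r=:d$. I would keep $\rho$ (equivalently $d$) as the only geometric quantity on which the estimate depends, which will make the required uniformity in $x$ transparent.

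First I would rewrite the defining integral of $v$ by grouping the points $y\in B$ according to their distance $s=|x-y|$ from $x$ (the coarea formula / spherical shells centered at $x$):
\[
\int_B \frac{e^{-\tau|x-y|}}{|x-y|}\,dy=\int_{d}^{\rho+r}\frac{e^{-\tau s}}{s}\,A(s)\,ds,
\]
where $A(s)$ is the area of the spherical cap $\{y\in B:\ |x-y|=s\}$. Intersecting the sphere of radius $s$ about $x$ with the ball $B$ and integrating the surface element over the admissible polar angle gives the elementary expression $A(s)=\tfrac{\pi s}{\rho}(s-d)(\rho+r-s)$ (a quick sanity check: $\int_d^{\rho+r}A(s)\,ds=\tfrac{4}{3}\pi r^3=|B|$). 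Substituting this into the potential and then setting $s=d+t$ to factor out $e^{-\tau d}$ (note $\rho+r-d=2r$) yields the closed form
\[
v(x)=\frac{e^{-\tau d}}{4\rho}\int_0^{2r} e^{-\tau t}\,t\,(2r-t)\,dt.
\]

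It then remains to bound the remaining integral from below. Restricting the range to $t\in[0,r]$, where $2r-t\geq r$, and using the exact value $\int_0^r e^{-\tau t}t\,dt=\tau^{-2}\bigl(1-e^{-\tau r}(1+\tau r)\bigr)$, I can fix $\tau_0=\tau_0(r)$ so that $\int_0^r e^{-\tau t}t\,dt\geq \tfrac12\tau^{-2}$ for all $\tau\geq\tau_0$. This gives $v(x)\geq \tfrac{r}{8\rho}\,\tau^{-2}e^{-\tau d}$, that is, $\tau^2 e^{\tau d_e(x,B)}v(x)\geq \tfrac{r}{8\rho}$ for every $\tau\geq\tau_0$.

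The only point that genuinely needs care is the claimed uniformity of $C_0$ and of $\tau_0$ in $x\in\Gamma$, and this is where I would concentrate the argument. The threshold $\tau_0$ depends solely on the fixed radius $r$, hence is automatically uniform. For the constant, compactness of $\overline{\Gamma}$ yields $R:=\sup_{x\in\Gamma}|x-p|<\infty$; since $\rho\leq R$, the previous display gives $\tau^2 e^{\tau d_e(x,B)}v(x)\geq \tfrac{r}{8R}=:C_0>0$ for all $x\in\Gamma$ and all $\tau\geq\tau_0$, which is exactly \eqref{est_v}. Thus the main (and essentially the sole) obstacle is organizing the estimates so that both the lower bound and the threshold are manifestly $x$-independent; the explicit formula for $v$ over a ball reduces this to the elementary bound on $\int_0^r e^{-\tau t}t\,dt$ together with the boundedness of $\Gamma$.
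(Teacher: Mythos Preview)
Your argument is correct, and it is in fact cleaner than the paper's. Both proofs pass to spherical coordinates about $x$, but the paper integrates first in the radial variable along each direction $\omega\in E_x=\{\omega:(p-x)\cdot\omega>\sqrt{|p-x|^2-r^2}\}$, obtaining $\int_{E_x}(e^{-\tau R_-(\omega)}-e^{-\tau R_+(\omega)})\,d\omega$, and then separately bounds the $R_+$ piece (exponentially negligible) and the $R_-$ piece via the inequality $R_-(\omega)-d_e(x,B)\leq |p-x|^2\sin^2\phi/r$. You instead integrate first over the angular variable by computing the spherical-cap area $A(s)=\tfrac{\pi s}{\rho}(s-d)(\rho+r-s)$, which cancels the $1/s$ in the kernel and collapses the whole potential to the explicit one-dimensional expression $v(x)=\tfrac{e^{-\tau d}}{4\rho}\int_0^{2r}e^{-\tau t}t(2r-t)\,dt$. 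This makes the $\tau^{-2}$ asymptotics and the $x$-dependence completely transparent. Two incidental observations: your constant $C_0=r/(8R)$ does not require $d_e(\Gamma,B)>0$, whereas the paper's constant $\tfrac{d_e(x,B)\,r}{4|p-x|^2}$ does; on the other hand, the paper's intermediate angular representation is reused later in the upper-bound part of the proof of Theorem~\ref{main}, so the extra machinery is not wasted there.
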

\begin{proof}  
Fix $x\in \Gamma$, we consider the spherical coordinate. Then any point $y$ in $B(p,r)$ can be expressed as
$
    y= x +\rho \omega 
$
with
$$
    R_-(\omega) < \rho < R_+(\omega),\ \omega\in E_x,
$$
where $E_x=\{\omega\in S^2:\ (p-x)\cdot \omega > \sqrt{|p-x|^2-r^2}\}$ and $$
    R_\pm(\omega)=(p-x)\cdot\omega\pm\sqrt{r^2-|p-x|^2+((p-x)\cdot\omega)^2}.
$$  
Hence by changing of coordinates we have
\begin{align*}
 v(x)&= {1\over 4\pi} \int_B {e^{-\tau |x-y|}\over |x-y|} \chi_B(y) dy \\
&= {1\over 4\pi} \int_{E_x}\int_{R_-(\omega)}^{R_+(\omega)} e^{-\tau \rho}  \rho  d\rho d\omega \\
&\geq -{d_e(x,B)\over 4\pi\tau} \int_{E_x}  e^{-\tau \rho}|_{R_-(\omega)}^{R_+(\omega)} d\omega\\
&=   {d_e(x,B)\over 4\pi\tau} \int_{E_x}  \LC e^{-\tau R_-(\omega)}-e^{-\tau R_+(\omega)}\RC d\omega .
\end{align*}
Notice that $d_e(x,B)=|p-x|-r$. From the definition of $R_+(\omega)$, it implies that $R_+(\omega)>\sqrt{|p-x|^2-r^2} =\sqrt{d_e(x,B) }\sqrt{|p-x|+r}$, which leads to
\begin{align}\label{R_p}
    \int_{E_x}e^{-\tau R_+(\omega)}d\omega&\leq m(E_x)e^{-\tau \sqrt{d_e(x,B)} \sqrt{|p-x|+r}} \notag\\
    &=m(E_x)e^{-\tau d_e(x,B)} e^{ \tau(d_e(x,B)- \sqrt{d_e(x,B)} \sqrt{|p-x|+r})}.
\end{align}
Here we denote the measure of set $E_x$ by $m(E_x)$.
In addition, we have $$d_e(x,B)- \sqrt{d_e(x,B)} \sqrt{|p-x|+r}<0.$$ Thus the integral	 
\begin{align}\label{R_+}
    e^{\tau d_e(x,B)}\int_{E_x}e^{-\tau R_+(\omega)}d\omega \leq  m(E_x) e^{ \tau(d_e(x,B)- \sqrt{d_e(x,B)} \sqrt{|p-x|+r})}
\end{align}
is decreasing exponentially as $\tau$ increases.	

Now we will estimate the integral $\int_{E_x}e^{-\tau R_-(\omega)}d\omega.$ First, for $\omega\in E_x$, we write
\begin{align*} 
    \omega(\phi,\theta) = \LC\sin\phi\cos\theta, \sin\phi\sin\theta, \cos\phi\RC,
\end{align*} 
where $0\leq \theta\leq 2\pi$, $0\leq \phi\leq \sin^{-1}(r/|p-x|)$.
Then $R_-(\omega)$ can be expressed as
\begin{align*} 
    R_-(\omega(\phi,\theta))  &= |p-x|\cos\phi - \sqrt{r^2-|p-x|^2\sin^2\phi}.
\end{align*}
Furthermore 
we need the following inequality: 
\begin{align*} 
    R_-(\omega(\phi,\theta))-d_e(x,B)
    &\leq |p-x|\cos\phi-\sqrt{r^2-|p-x|^2\sin^2\phi}-|p-x|+r\\
    &=|p-x|\LC\cos\phi -1 \RC+\LC r-\sqrt{r^2-|p-x|^2\sin^2\phi} \RC\\
    &= |p-x|\LC{ -\sin^2\phi \over  \cos\phi +1} + {|p-x|\sin^2\phi  \over r+\sqrt{r^2-|p-x|^2\sin^2\phi}}\RC\\
    &\leq |p-x|{|p-x|\sin^2\phi  \over r+\sqrt{r^2-|p-x|^2\sin^2\phi}}\\
    &\leq {|p-x|^2 \sin^2\phi \over r}.
\end{align*}
The integral can be written as
\begin{align}\label{R_-}
    \int_{E_x}e^{-\tau R_-(\omega)}d\omega  
    &=\int_{0}^{2\pi} \int_0^{\sin^{-1}(r/|p-x|)}  e^{-\tau R_-(\omega(\phi,\theta))} \sin\phi d\phi d\theta \notag \\
    &=e^{-\tau d_e(x,B)}\int^{2\pi}_0  \int_0^{\sin^{-1}(r/|p-x|)}  e^{-\tau (R_-(\omega(\phi,\theta)) -d_e(x,B))} \sin\phi d\phi d\theta \notag \\
    &\geq e^{-\tau d_e(x,B)}\int^{2\pi}_0  \int_0^{\sin^{-1}(r/|p-x|)} e^{-\tau |p-x|^2\sin^2\phi /r} \sin\phi d\phi d\theta \notag\\
    &\geq e^{-\tau d_e(x,B)}\int^{2\pi}_0  \int_0^{\sin^{-1}(r/|p-x|)} e^{-\tau |p-x|^2\sin^2\phi /r} \sin\phi \cos\phi d\phi d\theta \notag\\
    &=  { e^{-\tau d_e(x,B)}\pi r\over  \tau |p-x|^2 }  \LC 1- e^{-\tau r}  \RC .
 \end{align}  
From \eqref{R_+} and \eqref{R_-}, we deduce that for $x\in\Gamma$, the following estimate holds:
\begin{align*}
        &\tau^2 e^{\tau d_e(x,B)}v(x)\\
        &\geq {d_e(x,B)  r\over 4 |p-x|^2}(1-e^{-\tau r})- {d_e(x,B)\tau \over 4\pi} m(E_x)e^{ \tau(d_e(x,B)- \sqrt{d_e(x,B)} \sqrt{|p-x|+r})}.
\end{align*}
When $\tau$ is sufficiently large, the above estimate is controlled by ${d_e(x,B)r\over 4 |p-x|^2}$ that is bounded from below by a positive constant which can be chosen to be independent of $x$ in $\Gamma$.
This completes the proof.
\end{proof}

Suppose there exists a diffeomorphism $\phi$ of $\Gamma$ such that $\phi:R\rightarrow \Gamma$ whose domain is an open subset $R$ in $\R^2$. Let $h(s,t)=|\phi(s,t)-p|$ for all $(s,t)\in R$. Let $(s_0,t_0)$ be the global minimum of $h$ on $\overline{R}$. 
We need the following estimate to derive the lower bound for \eqref{id_z}.

\begin{lemma}\label{laplace}
There exists $\tau_0>0$ such that for all $\tau\geq \tau_0$,
if $(s_0,t_0)$ is a nondegenerate critical point of $h$, then 
   \begin{align}\label{lower_1}
    \tau^{3	}e^{\tau d(\Gamma, B)}\int_\Gamma v(x)dS >0;
   \end{align} 
if $(s_0,t_0)$ is on the boundary of $R$ and is not a critical point of $h$, then 
\begin{align}\label{lower_2}
 \tau^{7/2	}e^{\tau d(\Gamma, B)}\int_\Gamma v(x)dS >0. 
\end{align} 
\end{lemma}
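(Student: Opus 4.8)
The plan is to reduce $\int_\Gamma v\,dS$ to a real Laplace-type integral over the parameter domain $R$ and to read off its decay rate from the behaviour of the phase $h$ near its global minimizer $(s_0,t_0)$. First I would invoke the bound \eqref{est_v}: for $\tau\geq\tau_0$ and all $x\in\Gamma$ one has $v(x)\geq C_0\tau^{-2}e^{-\tau d_e(x,B)}$ with $C_0>0$ independent of $x$. Since $d_e(x,B)=|x-p|-r$, writing $x=\phi(s,t)$ and $dS=J(s,t)\,ds\,dt$ with $J>0$ continuous, this yields
\begin{align*}
\int_\Gamma v\,dS \geq \frac{C_0\,e^{\tau r}}{\tau^2}\int_R e^{-\tau h(s,t)}\,J(s,t)\,ds\,dt.
\end{align*}
Because $v>0$ everywhere and the integrand is nonnegative, I may discard the contribution away from $(s_0,t_0)$ and localize the last integral to a small neighborhood $U$ of $(s_0,t_0)$; the whole asymptotic rate is produced by $U$. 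Under the line-of-sight hypothesis $d(\Gamma,B)=d_e(\Gamma,B)=h(s_0,t_0)-r$, so the exponential factors combine to $e^{-\tau d(\Gamma,B)}$, and the task becomes bounding $\int_U e^{-\tau h}\,J\,ds\,dt$ below by a constant multiple of the correct negative power of $\tau$ times $e^{-\tau h(s_0,t_0)}$.

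For the interior nondegenerate case I would Taylor expand $h(s,t)=h_0+\tfrac12\langle A\xi,\xi\rangle+o(|\xi|^2)$ with $\xi=(s,t)-(s_0,t_0)$ and $A=\mathrm{Hess}\,h(s_0,t_0)$ positive definite. Shrinking $U$ so that $h\leq h_0+\langle A\xi,\xi\rangle$ there and bounding $J$ below by $J(s_0,t_0)/2$, the resulting two-dimensional Gaussian integral gives $\int_U e^{-\tau h}\,J\geq c\,\tau^{-1}e^{-\tau h_0}$. Combined with the $\tau^{-2}$ prefactor this produces $\int_\Gamma v\,dS\geq c'\,\tau^{-3}e^{-\tau d(\Gamma,B)}$, which after multiplying by $\tau^3 e^{\tau d(\Gamma,B)}$ gives \eqref{lower_1}.

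For the boundary case I would introduce coordinates $(n,w)$ near $(s_0,t_0)$ with $n\geq 0$ the inward normal to $\partial R$ and $w$ the tangential variable. Since $(s_0,t_0)$ is a minimum of $h$ on $\overline R$ but not a critical point, first-order optimality forces the inward normal derivative $a=\partial_n h(s_0,t_0)$ to be strictly positive, while $(s_0,t_0)$ minimizes $h$ along $\partial R$, so $\partial_w h=0$ and $b=\tfrac12\partial_w^2 h\geq 0$ there. Taylor expanding, $h\leq h_0+2an+2bw^2$ on a small half-neighborhood, and the product of the one-dimensional exponential integral $\int_0^\delta e^{-2a\tau n}\,dn\sim(2a\tau)^{-1}$ with the Gaussian $\int e^{-2b\tau w^2}\,dw\sim c\,\tau^{-1/2}$ gives $\int_U e^{-\tau h}\,J\geq c\,\tau^{-3/2}e^{-\tau h_0}$. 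Together with $\tau^{-2}$ this yields $\int_\Gamma v\,dS\geq c'\,\tau^{-7/2}e^{-\tau d(\Gamma,B)}$, i.e. \eqref{lower_2}; when $b=0$ the tangential integral decays only more slowly, so the same lower bound persists.

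The main obstacle is the boundary case: I must set up the half-space local coordinates correctly, justify the sign $a>0$ from the minimality and non-criticality hypotheses, and control both the Jacobian and the gap between the crude amplitude $C_0$ of \eqref{est_v} and the true amplitude of $v$. Since only a lower bound is required, these difficulties are mitigated: it suffices to replace the true phase by a linear-in-$n$, quadratic-in-$w$ upper bound on a shrinking neighborhood and to bound all positive continuous amplitudes below by positive constants, so no sharp remainder estimates or stationary-phase cancellations are needed.
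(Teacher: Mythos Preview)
Your proposal is correct and follows the same overall strategy as the paper: both invoke \eqref{est_v} to reduce $\int_\Gamma v\,dS$ to the Laplace integral $J(\tau)=\int_R e^{-\tau h}\,|\phi_s\times\phi_t|\,ds\,dt$ and then extract the decay rate near the minimizer $(s_0,t_0)$. For the interior nondegenerate case the two arguments are essentially the same, though the paper invokes the Morse lemma to make the phase exactly quadratic while you use only a crude quadratic upper bound on $h$; since only a lower bound on $J$ is needed, your shortcut suffices.

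The treatments genuinely diverge in the boundary case. The paper proceeds by integration by parts: writing the integrand as a divergence via $n_0=k\nabla h/|\nabla h|^2$, the divergence theorem converts the two-dimensional integral over $\tilde R$ into the boundary line integral $-\tau^{-1}\int_{\partial\tilde R}(n_0\cdot\nu)e^{-\tau h}\,dl$ plus lower-order remainders, and a one-dimensional Laplace estimate on that line integral (using that the restriction $\tilde h(z)=h(\gamma(z))$ has $\tilde h'(z_0)=0$ and $\tilde h''(z_0)>0$) produces the factor $\tau^{-3/2}$. Your route instead introduces half-space coordinates $(n,w)$ directly, bounds the phase above by $h_0+2an+2bw^2$, and reads off $\tau^{-1}\cdot\tau^{-1/2}$ from the product of a one-sided exponential integral in $n$ and a Gaussian in $w$. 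Your argument is more elementary and avoids the bookkeeping of the iterated divergence-theorem step; the paper's argument is closer to classical stationary-phase formalism and would more readily yield a full asymptotic expansion if one were required. One small point: when $b=0$ your literal upper bound $h\leq h_0+2an+2bw^2$ fails, but (as you note) replacing $2b$ by any fixed $C>0$ on a small enough neighborhood still gives the $\tau^{-3/2}$ rate, so the conclusion stands.
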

\begin{proof}
From estimate \eqref{est_v}, for $\tau\geq \tau_0$, we have
   \begin{align*}
         e^{\tau d(\Gamma, B)}\int_\Gamma v(x)dS
        &\geq 
         \tau^{-2}e^{\tau d(\Gamma, B)}\int_\Gamma   e^{-\tau d_e(x,B)}dS  \\
         &\geq 
         \tau^{-2}e^{\tau d(p,\Gamma)}\int_\Gamma    e^{-\tau |x-p|}dS.
   \end{align*}
In the following we will show that $\int_{\Gamma}e^{-\tau |x-p|}dS$ is bounded away from zero when $\tau$ is sufficiently large. By the definition of surface integral, we have
$$
    \int_\Gamma    e^{-\tau |x-p|}dS = \int\int_R e^{-\tau |\phi(s,t)-p|}  |\phi_s\times\phi_t |dsdt=:J(\tau).
$$
Since $p$ is not in $\Gamma$, it implies that $h(s,t)$ is a smooth function on $R$. The global minimum of $h$ could be either in the interior of $R$ or on the boundary of $R$. 
We apply the Laplace method to derive a nonzero lower bound for $J(\tau)$, then the two estimates in this lemma hold.
In the following, we first consider the minimum occurs inside $R$. Then there exists a point $(s_0,t_0)$ in the interior of $R$ such that $h(s_0,t_0)=d(p,\Gamma)$. Since $(s_0,t_0)$ is an interior point, we can find a small open ball $R_0$ containing $(s_0,t_0)$ and $R_0$ is in the interior of $R$. Moreover, we choose $R_0$ such that the Morse's lemma applies. Then there are neighborhood $D_0$ of $0$ in $\mathbb{R}^3$ and a diffeomorphism $\psi: D_0\rightarrow R_0$ so that $\psi(0)=(s_0,t_0)$. By changing of variables, $\psi(\zeta)=(s,t)$, we have  
\begin{align*}
   J(\tau)&\geq \int\int_{R_0} e^{-\tau h(s,t)}|\phi_s\times\phi_t|dsdt\\
   &=e^{-\tau h(s_0,t_0)} \int\int_{\psi^{-1}(R_0)} e^{-{1\over 2}\tau (\mu_1\zeta_1^2+\mu_2\zeta_2^2)} \tilde{\phi}(\zeta)  d\zeta,
\end{align*}
where $\zeta=(\zeta_1,\zeta_2)$ and $\mu_1,\mu_2$ are positive eigenvalues of Hessian of $h$ at $(s_0,t_0)$. Here we denote $\tilde \phi(\zeta)=|\phi_s\times\phi_t|(\psi(\zeta))\text{det}{(D \psi (\zeta))}$. By Taylor's theorem, we have
$$
    \tilde \phi(\zeta)=\tilde \phi(0) + \nabla \tilde{\phi}(\xi )\cdot \zeta
$$
for $\xi \in D_0$.
Integrating each term, $J(\tau)$ is bounded from below by the leading term:
\begin{align*}
    J(\tau)\geq  e^{-\tau h(s_0,t_0)} \tilde{\phi}(0) \tau^{-1}
\end{align*}
for $\tau$ large and some constant $C$ depending on $\mu_1,\mu_2$. It leads to the lower bound \eqref{lower_1}.

Now we consider the case where
$h$ has a minimum at the boundary point $(s_0,t_0)$. Then $h(s_0,t_0)=d(p,\Gamma)$. If $\nabla h(s_0,t_0)=0$, then by following a similar argument as above, we have the same order as in the previous case. Thus, we only discuss the situation when $(s_0,t_0)$ satisfies $\nabla h(s_0,t_0)\neq 0$. Suppose there exists an open subset $\tilde R$ of $R$ such that $(s_0,t_0)\in \tilde R$ and $\nabla h\neq 0$ in $\tilde R$. 
We denote $k(s,t)= |\phi_s\times\phi_t|$ and define functions $n_0,n_1,$ and $k_1$ by 
$$
    n_0 ={\nabla h\over|\nabla h|^2}  k,\ \  n_1={\nabla h\over|\nabla h|^2}  k_1,\ \ k_1 =\nabla\cdot  n_0. 
$$ By the divergence theorem, we have
\begin{align}\label{Jtau}
   J(\tau) 
   &\geq \int\int_{\tilde R} e^{-\tau h(s,t)} k(s,t)dsdt \notag\\
   &={1\over \tau^2} \int\int_{\tilde R} (\nabla\cdot n_1) e^{-\tau h }dsdt-\sum^1_{j=0}{1\over\tau^{j+1}}\int_{\p \tilde R } (n_{j}\cdot \nu )e^{-\tau h }dl,
\end{align}
where $dl$ is the line integral, $\nu$ is the normal to the boundary of $\tilde R $ (that is, $\p \tilde R $).
The leading term is 
\begin{align*} 
    -{1\over\tau}\int_{\p\tilde R } (n_{0}\cdot \nu)e^{-\tau h }dl.
\end{align*}
Assume that $\p \tilde R $ is parametrized by $\gamma(z)$, for $z$ in the parameter domain $I$ so that $(s_0,t_0)=\gamma(z_0)$ for some interior point $z_0$ in $I$. By the definition of the line integral, we have
\begin{align*} 
\int_{\p \tilde R } (n_{0}\cdot \nu)e^{-\tau h }dl=\int_I (n_0\cdot \nu)(\gamma(z)) e^{-\tau h( \gamma(z)) } |\gamma'(z) | dz.
\end{align*}
Let $\tilde h(z)=h(\gamma(z))$. Since $z_0$ is the interior point of $I$ and the minimizer of $\tilde h(z)$, it implies that $\tilde h'(z_0)=0$ and $\tilde h''(z_0)> 0$.
Since $\tilde h$ is smooth, for $\varepsilon>0$, there exists  $\delta >0$ such that if $|c-z_0|<\delta$, then $0< \tilde h''(c)<\tilde h''( z_0 )+\varepsilon$. By Taylor's theorem, for any $|z-z_0|<\delta$, we have
$$
   -\tilde  h(z)\geq -\tilde h(z_0)-{1\over 2} ( \tilde h''(z_0)+\varepsilon)(z-z_0)^2.
$$
Note that since $z_0$ is a minimum, $\tilde h'(z_0)=0$ implies that $\nu(\gamma(z_0))=-\nabla h/|\nabla h|(s_0,t_0)$. Then $-(n_0\cdot \nu)(\gamma(z_0))= k(\gamma(z_0)) /|\nabla h|$ is positive at $z_0$. By choosing $\delta$ sufficiently small such that $-(n_0\cdot \nu)> 0$ in $(z_0-\delta,z_0+\delta)$, then we have the following lower bound:
\begin{align*}
    &\int_{\p \tilde R } -(n_{0}\cdot \nu)e^{-\tau h }dl\\
    &\geq e^{-\tau \tilde h(z_0)}  \int_{z_0-\delta}^{z_0+\delta} -(n_0\cdot \nu)(\gamma(z)) e^{ - {1\over 2}\tau  ( \tilde h''(z_0)+\varepsilon)(z-z_0)^2 } |\gamma'(z) |  dz-O(e^{-\tau(c+\tilde h(z_0))})\\
    &=\eta(\tau)^{-1} e^{-\tau \tilde h(z_0)}  \int^{ \delta\eta(\tau)}_{ -\delta \eta(\tau)}-(n_0\cdot \nu)(\gamma(z(y))) e^{ - y^2 } |\gamma'(z(y)) |  dy-O(e^{-\tau(c+\tilde h(z_0))}),
\end{align*}
where $\eta(\tau) = \sqrt{ {1\over 2}\tau  ( \tilde h''(z_0)+\varepsilon)}$ and constant $c$ is independent of $\tau$. Here the identity comes from the change of variables, $$y=\eta(\tau)(z-z_0).$$
We denote $ q(y) = -(n_j\cdot \nu)(\gamma(z(y))) |\gamma'(z(y))|  $. 
Applying Taylor's theorem again, there exist an interval $(-\beta,\beta)$ with $0<\beta<\delta\eta(\tau)$ and a point $c$ in it, then we have
$$
   q(y)=q(0)+q'(c)y
$$
for all $y$ in the interval $(-\beta,\beta)$. Then we have
\begin{align*} 
&\int^{ \delta\eta(\tau)}_{ -\delta\eta(\tau)}-(n_0\cdot \nu)(\gamma(z(y))) e^{ - y^2 } |\gamma'(z(y)) |  dy \\
&\geq \int^{\beta}_{-\beta} q(0) e^{-y^2} dy+  \int^{\beta}_{-\beta} q'(c)y e^{-y^2} dy\geq 2 q(0)\beta e^{-\beta^2}.
\end{align*}
From this inequality, we deduce that 
\begin{align}\label{leading}
   \int_{\p \tilde R }- (n_{0}\cdot \nu)e^{-\tau h }dl
    \geq  2  q(0) \beta e^{-\beta^2}\eta(\tau)^{-1} e^{-\tau \tilde h(z_0)} .
\end{align}
We recall that $q(0)$ can be expressed by the original function $k$, that is, $$q(0)={k(s_0,t_0) |\gamma'(z_0) |\over |\nabla h(s_0,t_0)|}.$$
Combining \eqref{Jtau} and \eqref{leading}, if $(s_0,t_0)$ is a minimum and $\nabla h(s_0,t_0)\neq 0$, then we have 
$$J(\tau) \geq O(\tau^{-3/2}e^{-\tau h(s_0,t_0)})$$ as $\tau$ goes to infinity. This leads to the inequality \eqref{lower_2}.

\end{proof}

\begin{lemma} Let $\delta=3$ if $h$ has a nondegenerate minimum; otherwise, $\delta=7/2$ if $h$ has a minimum on $\p R$ and it is not a critical point. Then the estimate
    \begin{align}\label{inf_1}
     \liminf_{\tau \rightarrow \infty} \tau^{\delta +1} e^{\tau d( \Gamma , B)}\int_{\Gamma}  f_\tau \p_\nu v-vg_\tau dS > 0 
    \end{align}  
    holds provided that $T_0 >  d(\Gamma, B)$. 
    Moreover, one has the following estimate
    \begin{align}\label{liminf}
    \liminf_{\tau\rightarrow \infty} \tau^{\delta+1} e^{\tau d( \Gamma  ,B)} I_{\p B} (\tau)>0.
    \end{align}
\end{lemma}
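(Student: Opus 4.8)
The plan is to work from the representation \eqref{id_z}, which splits the indicator function into a boundary term over the quench and a volume tail term carrying the finite-time cutoff:
$$
I_{\p B}(\tau) = \underbrace{\int_\Gamma \left( f_\tau\,\p_\nu v - v\,g_\tau \right)dS}_{=:\,I_1(\tau)} + \underbrace{\int_{(\overline{B\cup D})^c} e^{-\tau T_0}\left(\tau u(T_0,x)+\p_t u(T_0,x)\right)v\,dx}_{=:\,I_2(\tau)}.
$$
I would first prove \eqref{inf_1}, the lower bound for $I_1$, and then show that $I_2$ is negligible on the scale $\tau^{-(\delta+1)}e^{-\tau d(\Gamma,B)}$ precisely when $T_0>d(\Gamma,B)$; adding the two estimates yields \eqref{liminf}.

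For $I_1$, the starting observation is that the pointwise bounds $f\geq\mu$ and $-g\geq\mu$ transfer to the Laplace transforms as $f_\tau\geq \mu\frac{1-e^{-\tau T_0}}{\tau}$ and $-g_\tau\geq \mu\frac{1-e^{-\tau T_0}}{\tau}$ on $\Gamma$. Since $v>0$ everywhere (it is the volume potential of a nonnegative density), the term involving $g_\tau$ is controlled directly by Lemma \ref{laplace}:
$$
-\int_\Gamma v\,g_\tau\,dS \ \geq\ \mu\,\frac{1-e^{-\tau T_0}}{\tau}\int_\Gamma v\,dS \ \geq\ C\,\tau^{-(\delta+1)}e^{-\tau d(\Gamma,B)}
$$
for $\tau$ large, where $\delta=3$ or $\delta=7/2$ according to the two cases of Lemma \ref{laplace}. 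This term already produces the claimed order, so it remains only to show that the contribution $\int_\Gamma f_\tau\,\p_\nu v\,dS$ does not destroy it.

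To handle $\int_\Gamma f_\tau\,\p_\nu v\,dS$ I would differentiate the volume potential and use the splitting $\Gamma=\Gamma_0(0)\cup\Gamma_1$. Writing $\p_\nu v(x)=\frac{1}{4\pi}\int_B e^{-\tau|x-y|}\left(\frac{\tau}{|x-y|}+\frac{1}{|x-y|^2}\right)\frac{\nu(x)\cdot(y-x)}{|x-y|}\,dy$, the cone condition $\nu(x)\cdot(y-x)>0$ defining $\Gamma_0(0)$ makes the integrand positive, so $\p_\nu v>0$ on $\Gamma_0(0)$ and hence $\int_{\Gamma_0(0)}f_\tau\,\p_\nu v\,dS\geq0$ since $f_\tau\geq0$. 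On $\Gamma_1$ the sign of $\p_\nu v$ is not controlled, but there every point satisfies $d_e(x,B)>d(\Gamma,B)$; using compactness of $\overline{\Gamma_1}$ together with the fact that the distance minimizer lies in $\Gamma_0(0)$ (a consequence of the nondegenerate, respectively boundary-noncritical, minimum hypothesis), one obtains a uniform gap $d_1:=\inf_{\Gamma_1}d_e(\cdot,B)>d(\Gamma,B)$. Then $|\p_\nu v(x)|\leq C\tau e^{-\tau d_1}$ on $\Gamma_1$, and since $\|f_\tau\|_{L^2(\Gamma)}\leq C\tau^{-1/2}$ by Cauchy--Schwarz in time, the $\Gamma_1$ contribution is $O(\tau^{1/2}e^{-\tau d_1})$, which is exponentially smaller than $\tau^{-(\delta+1)}e^{-\tau d(\Gamma,B)}$. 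Combining the three pieces gives $\liminf_{\tau\to\infty}\tau^{\delta+1}e^{\tau d(\Gamma,B)}I_1(\tau)>0$, i.e. \eqref{inf_1}.

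Finally, for the tail term $I_2$ I would use the energy bounds $\|u(T_0)\|_{H^1},\|\p_t u(T_0)\|_{L^2}\leq CM$ together with an a priori bound on $v$: testing \eqref{equ_v} against $v$ over $\R^3$ gives $\int|\nabla v|^2+\tau^2\int v^2=\int_B v$, hence $\|v\|_{L^2(\R^3)}\leq |B|^{1/2}\tau^{-2}$. By Cauchy--Schwarz, $|I_2|\leq e^{-\tau T_0}\left(\tau\|u(T_0)\|_{L^2}+\|\p_t u(T_0)\|_{L^2}\right)\|v\|_{L^2}\leq C\tau^{-1}e^{-\tau T_0}$, so that $\tau^{\delta+1}e^{\tau d(\Gamma,B)}|I_2|\leq C\tau^{\delta}e^{-\tau(T_0-d(\Gamma,B))}\to0$, the decay being exactly where $T_0>d(\Gamma,B)$ is used. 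Adding this to \eqref{inf_1} gives \eqref{liminf}. I expect the main obstacle to be the $\Gamma_1$ analysis: showing the gap $d_1>d(\Gamma,B)$ is uniform rather than merely pointwise, and that the unsigned normal derivative there is genuinely dominated, which is where the hypotheses on the location of the minimizer and on $\Gamma_1$ become essential.
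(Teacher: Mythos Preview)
Your proof is correct and follows the same overall architecture as the paper: split $I_{\p B}=I_1+I_2$ via \eqref{id_z}, use the lower bounds $f_\tau,-g_\tau\geq\mu(1-e^{-\tau T_0})/\tau$ together with Lemma~\ref{laplace} for the main term, control the $\Gamma_1$ contribution through the strict distance gap, and kill $I_2$ by energy estimates on $u$ and an $L^2$ bound on $v$. The one genuine difference is in how you treat $\int_\Gamma f_\tau\,\p_\nu v\,dS$: the paper introduces the $\varepsilon$-cushion set $\Gamma_0(\varepsilon)$, bounds $\p_\nu v$ below by a multiple of $v$ there, and reapplies Lemma~\ref{laplace} on $\Gamma_0(\varepsilon)$ to prove the stronger statement $\liminf\tau^{\delta}e^{\tau d(\Gamma,B)}\int_\Gamma\p_\nu v\,dS>0$; you instead observe that $\p_\nu v>0$ on $\Gamma_0(0)$, so this piece is merely nonnegative, and let the $-\int_\Gamma v g_\tau$ term carry the entire positive lower bound. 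Your route is more economical for the lemma as stated, while the paper's route yields extra information (a positive lower bound on the normal-derivative integral itself). Your $\|v\|_{L^2}\leq|B|^{1/2}\tau^{-2}$ is also slightly sharper than the paper's $O(\tau^{-3/2})$, though either suffices. The uniform-gap issue on $\Gamma_1$ that you flag is present in both arguments and is handled the same way.
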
 

\begin{proof}

We now estimate $\int_{\Gamma} \p_\nu v(x)dS$.
Recall that for any $x\in \Gamma$(connected), $ -\alpha<\nu(x)\cdot(y-x)< \beta $ for all $y\in B$ where $\alpha, \beta>0$.
We denote for $\varepsilon>0$,
$$
    \Gamma_0(\varepsilon)=\{x\in \Gamma:\ \nu(x)\cdot(y-x)>\varepsilon\ \hbox{for all }y\in B\}
$$
and $\Gamma_1=\Gamma\setminus\Gamma_0(0)$.  
There exists a point $x_0$ on $\overline\Gamma$ such that
$
    d_e(x,B)\leq d_e(x_0, B)
$
for all $x\in \overline\Gamma$. We deduce that
\begin{align*}
 \int_{\Gamma} \p_\nu v(x)dS 
 &\geq\int_{\Gamma_0(0)}\p_\nu v(x) dS  -\LV  \int_{\Gamma_1}\p_\nu v(x)dS\RV \\ 
 &\geq   \int_{\Gamma_0(\varepsilon)} {1\over 4\pi}\int_B \varepsilon\LC\tau + {1\over |x-y|}\RC  {e^{-\tau |x-y|}\over |x-y|^2} dydS - L\\
  &\geq \int_{\Gamma_0(\varepsilon)}{\varepsilon \over (d(x_0,B)+2r) } \LC\tau + {1\over d(x_0,B)+2r}\RC  v(x)dS - L,
\end{align*}
where $L$ is defined in the following estimate
\begin{align*}
 &\LV  \int_{\Gamma_1}\p_\nu v(x)dS\RV \\ 
 &\leq  (\alpha \vee \beta) \int_{\Gamma_1}{1\over 4\pi}\int_B \LC\tau + {1\over |x-y|}\RC  {e^{-\tau |x-y|}\over |x-y|^2} dydS \\
  &\leq (\alpha \vee \beta)\int_{\Gamma_1 } {1\over 4\pi}m(B)\LC\tau + {1\over d_e(x, B)}\RC{e^{-\tau d_e(x, B)} \over d_e(x, B)^2} dS=:L.
\end{align*}
Here $\alpha\vee\beta=\text{max}\{\alpha, \beta\}$.
Notice that the result in lemma \ref{laplace} also works for a small region $\Gamma_0(\varepsilon)$. Therefore from the hypothesis $T_0>d(\Gamma, B)$ and $d_e(x, B)>d(\Gamma,B)$ for all $x\in\Gamma_1$, we obtain
\begin{align}\label{lower_3}
    \liminf_{\tau\rightarrow \infty} \tau^{\delta}e^{\tau d(\Gamma, B)}\int_{\Gamma} \p_\nu v(x)dS >0.
\end{align} 
Since $f$ and $-g$ are bounded below by $\mu>0$, by definitions of $f_\tau$ and $g_\tau$, we have
$$
    f_\tau, -g_\tau \geq \mu{1-e^{-\tau T_0}\over \tau}.
$$
Then by lemma \ref{laplace} and \eqref{lower_3}, we can derive 
\begin{align}\label{id_z0}
       \liminf_{\tau\rightarrow\infty}\tau^{\delta+1} e^{\tau d(\Gamma,B)}\int_{ \Gamma}  f_\tau\p_\nu v-vg_\tau dS   > 0,
\end{align}
which is the first estimate in the lemma.

Now we consider the second integral in \eqref{id_z}.
First we recall that $u\in L^\infty(0, T_0; H^1(\R^3\setminus \overline{D}))$ and $\p_t u\in L^\infty(0, T_0; L^2(\R^3\setminus \overline{D}))$, then one has
\begin{align}\label{u_norm}
     \|\tau u(T_0,x)+\p_t u( T_0,x)\|_{L^2(\R^3\setminus \overline{D} )} = O(\tau)  
\end{align}
as $\tau\rightarrow\infty$.
Second, by integration by parts, from \eqref{equ_v} we have
\begin{align*}
    \int_{\R^3} |\nabla v|^2dx +\int_{\R^3} \tau^2v^2dx=\int_Bvdx.
\end{align*}
Then by H\"older's and Young's inequalities, we obtain the estimate
$$
    \|\nabla v\|^2_{L^2(\R^3)} + \tau^2 \|v\|^2_{L^2(\R^3)}\leq \tau \|v\|^2_{L^2(\R^3)}+{1\over 4\tau}m(B)^2
$$
which leads to
$$\|v\|_{L^2(\R^3)}=O(\tau^{-3/2} ) $$
for $\tau\rightarrow \infty$. 
Combining with \eqref{id_z}, \eqref{id_z0}, $T_0>d(\Gamma, B)$, and using H\"older's inequality again, we derive that 
$$
    \liminf_{\tau\rightarrow \infty} \tau^{\delta+1} e^{\tau d(\Gamma ,B)} I_{\p B} (\tau)>0,
$$
which completes the proof.  
\end{proof}

In order to ensure the existence of the limit in \eqref{main_id}, it remains to estimate the upper bound of the indicator function. 
\begin{proof}[Proof of Theorem \ref{main}.]
We consider the integral which appears in \eqref{id_z}. From the assumptions on $f$ and $g$, we have
\begin{align*}
     | \int_{ \Gamma }  f_\tau\p_\nu v-vg_\tau dS|
     &\leq \|f_\tau\|_{L^2(\Gamma)} \|\p_\nu v\|_{L^2(\Gamma)}+\|g_\tau\|_{L^2(\Gamma)} \| v\|_{L^2(\Gamma)}\\
     &\leq M\LC{1-e^{-2\tau T_0} \over 2\tau} \RC^{1/2} \LC \|\p_\nu v\|_{L^2(\Gamma)}+ \|v\|_{L^2(\Gamma)}\RC.
\end{align*} 
Then we get
\begin{align*}
 v(x) = {1\over 4\pi} \int_B {e^{-\tau |x-y|}\over |x-y|} dy  
&= {1\over 4\pi} \int_{E_x}\int_{R_-}^{R_+} e^{-\tau \rho}  \rho  d\rho d\omega \\
&\leq {d_e(x_0,B)+2r\over 4\pi\tau} \int_{E_x}  \LC e^{-\tau R_-(\omega)}-e^{-\tau R_+(\omega)}\RC d\omega.
\end{align*}
From \eqref{R_p} and the fact that $R_-(\omega)\geq d(\Gamma, B)$, we can deduce that
\begin{align}\label{est2_v}
    |v(x)|\leq  m(E_x) k(\tau){d_e(x_0,B)+2r \over 4\pi \tau},
\end{align}
where 
$$
    k(\tau)=  e^{-\tau d(\Gamma,B)} +e^{-\tau d(x,B)}e^{\tau ( d_e(x, B)-\sqrt{d_e(x, B)}\sqrt{|p-x|+r})} .
$$
Since $d_e(x,B)\geq d(\Gamma, B)$ by hypothesis, it implies that 
\begin{align*}
    \|v\|_{L^2(\Gamma)} \leq m(S^2) m(\Gamma)^{1/2} \tilde k(\tau){d_e(x_0,B)+2r \over 4\pi \tau},
\end{align*}
where constant $C$ 
$$
    \tilde k(\tau)=  e^{-\tau d(\Gamma,B)} +e^{-\tau d(\Gamma,B)}e^{\tau {-2r\sqrt{d_e(x_0, B)}\over \sqrt{d_e(x_0,B)}+\sqrt{d_e(x_0,B)+2r}}  } .
$$
Furthermore, from the estimate
\begin{align*}
    |\p_\nu v(x)|\leq {\alpha\vee\beta\over d(\Gamma,B)}\LC\tau+{1\over d(\Gamma,B)}\RC|v(x)|,
\end{align*}	
we obtain that
\begin{align*}
 \|\p_\nu v\|_{L^2(\Gamma)}\leq {\alpha\vee\beta \over   d(\Gamma,B)}\LC\tau+{1\over d(\Gamma,B)}\RC \|v\|_{L^2(\Gamma)} .
\end{align*}	
Then we have the following estimate
$$
    |\int_{ \Gamma }  f_\tau \p_\nu v-vg_\tau dS|=O(  \tau^{-1/2}e^{-\tau  d(\Gamma,B)}).
$$
From the above estimate, combining with \eqref{id_z}, \eqref{u_norm} and $\|v\|_{L^2(\R^3)}=O(\tau^{-3/2})$, we further deduce that  
\begin{align*}
   \limsup_{\tau\rightarrow\infty} \tau^{1/2} e^{\tau d(\Gamma, B)} I_{\p B}(\tau) <\infty.
\end{align*}
Hence, with \eqref{liminf}, the estimate \eqref{main_id} in Theorem \ref{main} holds. The identities \eqref{main_id1} and \eqref{main_id2} can be derived similarly by following the above argument. Therefore, we complete the proof of Theorem \ref{main}.

\end{proof}

\begin{remark}
From the proof of Theorem \ref{main} above, with one OST, we can derive a lower bound for the size of the quench $\Gamma$, that is, 
\begin{align}\label{size_q}
    \limsup_{\tau\rightarrow \infty} \tau^{1/2} e^{\tau d(\Gamma, B)}  |I_{\p B}(\tau)|\leq  c_0 m(\Gamma)^{1/2} ,
\end{align}
where $c_0={Mm(S^2)\over 4\sqrt{2}\pi}{(\alpha \vee\beta)(d_e(x_0, B)+2r)\over d(\Gamma, B)}$ with $x_0\in \overline{\Gamma}$ satisfies $d_e(x,B)\leq d_e(x_0, B)$
for all $x\in \overline\Gamma$.
 
For example, we consider a simple setting of a quench on a unit disk. By using \eqref{size_q}, we have a lower bound for the radius of this quench spot.
Let the cavity $D=\{(x_1,x_2,0):\ x^2_1+x^2_2\leq 1\}$ and the quench $\Gamma=\{(x_1,x_2,0):\ x_1^2+x_2^2\leq r_q^2\}$ for some scalar $0<r_q<1$. Suppose that $p=(a,b,c)$ with $c>0$ is the center of an OST, $B=B(p,r)$, with $0<r<c$. In spherical coordinates, we write $p=(\rho\cos\theta\sin\phi, \rho\sin\theta\sin\phi, \rho\cos\phi)$ for $\rho>0$, $0\leq \theta\leq 2\pi,\ 0\leq \phi\leq \pi$.
Thus we have  
$$
    d_e(x_0,B)\leq |p+(\cos \theta ,\sin\theta,0)| - r,
$$
which implies that 
$$
    c_0\leq \tilde c_0:={Mm(S^2)\over 4\sqrt{2}\pi}{(\alpha \vee\beta)(|p+(\cos \theta ,\sin\theta,0)| + r)\over d(\Gamma, B)}.
$$
Combining with \eqref{size_q}, we deduce the range for the radius of the quench
\begin{align*}
     1>r_q  \geq \tilde c_0^{-1}\pi^{-1/2} \limsup_{\tau\rightarrow \infty} \tau^{1/2} e^{\tau d(\Gamma, B)}  |I_{\p B}(\tau)|.
\end{align*}
\end{remark}

\section{Conclusion}
In this paper, we rigorously studied quench detection on the surface of a SRF cavity bathed in the superfluid helium. We have established a mathematical formula for the shortest distance between an OST and a quench by using the second sound wave that propagates in the superfluid helium. 
The distance formula derived in this paper can be potentially applied to different models for a quench and cavities with suitable adjustments. For example, the crab cavity has complex geometry that makes a triangulation process is difficult to locate a quench in practice. We would like to note that more information of a quench can be detected if the distance formula is applied on more OSTs.
In particular, to locate a point-like heat source on the cavity in three dimensions which is the widely used model for quench detection in laboratories, the quench point can be detected by utilizing at least three OSTs with their corresponding distances derived by the formula to the quench. For a nonsingular quench spot, its size and shape can be determined by acquiring the distances from a few of OSTs to the quench.

For our further studies on quench detection, it is important and challenging to understand the effect of thermodynamics in cavity surface made of niobium and the transition of heat propagation from the niobium to the superfluid helium. 
In \cite{Liu}, it was observed that  heat could first propagate along the cavity surface and then later excite an entropy wave
in the superfluid helium.  In particular, in  experiments since the speed of propagation is faster in the niobium than in the superfluid helium, the measured travel time at an OST is smaller. Consequently,  the measured quench location is found to be above the cavity surface.
To study quench detection in this  situation with coupled physics, it might be closely related to the coupling of a heat equation with a wave equation, see for example \cite{I17couple, ZuaZua}.
Furthermore, it was observed in laboratories \cite{Liu, MS}, the size of quench could also depend on the dynamics of the heat propagation on the cavity surface. This can  result in an overestimation of the measured wave velocity and hence would  lower the accuracy of quench detection. 







\vskip1cm 
\textbf{Acknowledgment.}
The authors thank Zachary Conway and Daniel Lathrop for stimulating discussions. 
Both authors thank Masaru Ikehata for helpful comments and bringing several references to the authors' attention. 
The second author was supported in part by  NSF grant DMS-1516565.

\bibliographystyle{abbrv}

\bibliography{dipolebib}

\end{document}